\newtheorem{theorem}{Theorem}[section]
\newtheorem{lemma}[theorem]{Lemma}
\newtheorem{proposition}[theorem]{Proposition}
\newtheorem{theorema}{Theorem}
\newtheorem*{theorem*}{Theorem}
\newtheorem{theoremaprime}{Theorem}
\theoremstyle{definition}
\newtheorem{construction}[theorem]{Construction}
\newtheorem{chunk}[theorem]{}
\newcommand{\Hom}{{\operatorname{Hom}}}
\newcommand{\Der}{{\operatorname{Der}}}
\newcommand{\Ext}{{\operatorname{Ext}}}
\renewcommand{\H}{\operatorname{H}}
\newcommand{\ad}{{\operatorname{ad}}}
\newcommand{\tors}{{\operatorname{t}}}
\newcommand{\lotimes}{\otimes^{\sf L}}
\newcommand{\susp}{{\scriptstyle\mathsf{\Sigma}}}
\newcommand{\pd}{\operatorname{projdim}}
\newcommand{\ind}{\operatorname{ind}}
\newcommand{\m}{\mathfrak{m}}
\newcommand{\dd}[1]{\frac{{\rm d}}{{\rm d}{#1}}}
\title[Vasconcelos' conjecture]{Vasconcelos' conjecture on the conormal module}
\author{Benjamin Briggs}
\address{University of Utah, Department of Mathematics, 
Salt Lake City, UT 84112}
\email{briggs@math.utah.edu}
\begin{document}


\begin{abstract}

For any ideal $I$ of finite projective dimension in a commutative noetherian local ring  $R$, we prove that if the conormal module $I/I^2$ has finite projective dimension over $R/I$, then $I$ must be generated by a regular sequence. 
This resolves a conjecture of Vasconcelos. 
We prove a similar result for the first Koszul homology module of $I$. When $R$ is a localisation of a polynomial ring over a field $K$ of characteristic zero, Vasconcelos conjectured that $R/I$ is a reduced complete intersection if the module $\Omega_{(R/I)/K}$ of K\"ahler differentials has finite projective dimension; we prove this contingent on the Eisenbud-Mazur conjecture.

The arguments exploit the structure of the homotopy Lie algebra associated to $I$ in an essential way. By work of Avramov and Halperin, if every degree $2$ element of the homotopy Lie algebra is radical, then $I$ is generated by a regular sequence. Iyengar has shown that free summands of $I/I^2$ give rise to central elements of the homotopy Lie algebra, and we establish an analogous criterion for constructing radical elements, from which we deduce our main result.


\end{abstract}

\dedicatory{Dedicated to the memory of Wolmer Vasconcelos}

\maketitle

\pagenumbering{arabic}

Among the first invariants one might attach to an ideal $I$ in a commutative noetherian ring $R$ is the {conormal module} $I/I^2$ over the quotient $S=R/I$. Ring theoretic properties of $S$ are often reflected in module theoretic properties of $I/I^2$. This is well illustrated by a result of Ferrand and Vasconcelos: as long as $I$ has finite projective dimension, the conormal module is projective over $S$ if and only if $I$ is locally generated by a regular sequence 
\cite{MR219546,MR213345}.

Vasconcelos later made a substantially stronger conjecture: \emph{if both $\pd_R I$ and $\pd_S I/I^2$ are finite, then $I$ is locally generated by a regular sequence} \cite{MR508082}. As evidence he offered the case $\pd_S I/I^2\leq 1$, and several special cases of low height \cite{MR0262227, MR814190}.



Gradual progress was made in the years that followed; 
Vasconcelos surveyed what was known at the time in \cite{MR814190}, as did Herzog in \cite{brazil}. 
The problem has also inspired a number of interesting research directions in the decades since (\cite{MR1052871,MR2989999,MR595423,MR1743666,MR882705} to name a few). 
A major forward step was taken by Avramov and Herzog   in \cite{MR1269426}; by analysing the behaviour of the Euler derivation, they established the conjecture for graded algebras over fields of characteristic zero. 


We resolve Vasconcelos' conjecture in its full generality.


\begin{theorema}\label{VC}
Let $R\to S=R/I$ be a surjective homomorphism of commutative noetherian rings, with $S$ of finite projective dimension over $R$. If the conormal module $I/I^2$ has finite projective dimension over $S$, then $I$ is locally generated by a regular sequence.
\end{theorema}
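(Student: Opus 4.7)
The approach is to realise $I$ being locally generated by a regular sequence as a vanishing statement about the homotopy Lie algebra of $R \to S$, and to verify it using the Avramov--Halperin criterion mentioned in the abstract. Since both hypotheses and the conclusion localise, one first reduces to the case where $(R,\m,k)$ is a noetherian local ring. Associated to the surjection $R \to S$ is a graded $k$-Lie algebra $\pi = \pi^*(R\to S)$ whose component dimensions are the deviations of the map; the surjection has kernel generated by a regular sequence precisely when these deviations vanish from degree two on.

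By the theorem of Avramov and Halperin, it suffices to show that every element of $\pi^2$ is radical, meaning that it lies in the maximal solvable graded ideal of $\pi$. The strategy is to adapt Iyengar's construction, which produces a \emph{central} element of $\pi^2$ from each free summand of the conormal module $I/I^2$; here we only need to produce radical elements, a weaker condition. The input we have available is a finite projective resolution
\[
0 \to P_d \to \cdots \to P_1 \to P_0 \to I/I^2 \to 0
\]
of $I/I^2$ over $S$. The idea is to convert this resolution into Lie-algebraic data on the minimal model of $R \to S$: each minimal generator $f$ of $I$ should give rise to an element $\chi_f \in \pi^2$, and the finiteness of $\pd_S(I/I^2)$ should force the Lie ideal of $\pi$ generated by $\chi_f$ to be solvable, with derived length bounded in terms of $d$. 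Once every minimal generator yields a radical element, $\pi^2$ itself lies in the radical of $\pi$, and Avramov--Halperin concludes.

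The main obstacle is the radicality criterion itself. Centrality, $[\chi_f,\pi]=0$, is a single-bracket condition that corresponds directly to a splitting of $I/I^2$; radicality is an iterated condition on $\pi$-stable Lie ideals, and extracting it from a projective (rather than split) resolution requires controlling the higher Massey-type brackets induced on the minimal model by the successive differentials in the resolution of $I/I^2$. Establishing this criterion, presumably by an induction along the length of the resolution of $I/I^2$ that produces at each stage an ideal of $\pi$ one derived step closer to being abelian, is the technical heart of the proof; combined with Avramov--Halperin it resolves Vasconcelos' conjecture.
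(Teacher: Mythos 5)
Your outline matches the paper's strategy at the top level: reduce to the local case, invoke the Avramov--Halperin theorem to reduce to showing every element of $\pi^2(\varphi)$ is radical, and generalise Iyengar's construction of central elements from free summands of $I/I^2$ to a construction of radical elements from a finite projective resolution. However, the proposal stops exactly where the proof has to begin: you state that ``establishing this criterion \dots is the technical heart of the proof'' without establishing it, and the mechanism you sketch for it --- an induction along the length of the resolution of $I/I^2$ that descends one step in the derived series per syzygy, controlling higher Massey-type brackets --- is not carried out and is not how the difficulty is actually overcome. Controlling iterated brackets through a non-split resolution is precisely the obstruction you correctly identify, and nothing in the proposal indicates how to surmount it.

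The paper's actual mechanism is different and sidesteps the derived series entirely. One uses the characterisation (coming from the finite-dimensionality of the radical, \cite[Theorem C]{MR935009}) that $z$ is radical if and only if $\operatorname{ad}(z)$ annihilates $\pi^{>n}(\varphi)$ for some $n$; solvability of the ideal $(z)$ is then automatic, with no stage-by-stage descent. The operator $-\operatorname{ad}(z)$ is realised on the minimal model $A=R[X]$ by the derivation $\theta_z=[\partial,\tfrac{{\rm d}}{{\rm d}z}]\colon A\to\m_A$. If the map $I/I^2\to k$ corresponding to $z$ factors through a finitely generated $S$-module $N$ of finite projective dimension (here $N=I/I^2$ itself), one lifts this factorisation through a semi-free dg $A$-module resolution $P$ of the syzygy that is \emph{finitely generated and free over $A$} (perfectness is where finite projective dimension enters), so that $\theta_z$ is cohomologous to a derivation $\alpha\beta$ with $\alpha\in\Hom_A(P,\m_A)\cong\m_A\otimes_A\Hom_A(P,A)$ a \emph{finite} sum $\sum x_i\sigma_i$, $x_i\in\m_A$. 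Taking $n$ larger than the degrees of the $x_i$ forces the induced derivation on the $n$-connected cover $A^{(n)}=k[X_{\geq n}]$ to vanish, hence $[z,\pi^{>n}(\varphi)]=0$. Without this (or an equivalent) argument the proposal is an accurate plan rather than a proof.
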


In the case that $R$ is a regular local ring, Theorem \ref{VC} yields a new characterisation of local complete intersection rings. 
The theorem can be restated geometrically:

\begin{theoremaprime}
Let $f\colon X\to Y$ be a closed immersion of locally noetherian schemes, locally of finite flat dimension. If the conormal sheaf of $f$ is perfect, then $f$ is a regular immersion.
\end{theoremaprime}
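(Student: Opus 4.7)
The plan is to deduce this sheaf-theoretic statement from Theorem~\ref{VC} by passing to stalks, since being a regular immersion is a local condition on $Y$.

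Fix a point $x\in X$ with image $y=f(x)\in Y$ and set $R=\mathcal{O}_{Y,y}$, $S=\mathcal{O}_{X,x}$, and $I\subset R$ equal to the stalk at $y$ of the ideal sheaf of $X$, so that $S=R/I$. Both rings are noetherian local, and $S$ is a finitely generated $R$-module. Hence the assumption that $f$ has finite flat dimension at $x$ translates to $\pd_R S<\infty$, since flat and projective dimension coincide for finitely generated modules over a noetherian ring. Similarly, perfectness of the conormal sheaf at $x$ says precisely that the finitely generated $S$-module $I/I^2$ has finite projective dimension over $S$.

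Theorem~\ref{VC} now applies, and its conclusion that $I$ is locally generated by a regular sequence implies in particular that $I\subset R$ itself is generated by a regular sequence. Spreading out to an open neighbourhood of $y$ in $Y$ produces the regular immersion structure around $x$, and since $x\in X$ was arbitrary this proves the theorem.

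The argument is really just a dictionary between the sheaf-theoretic and local-algebraic formulations; the only thing to verify is that the geometric hypotheses transfer correctly to the stalks. This rests on standard facts for finitely generated modules over noetherian rings: finite flat dimension coincides with finite projective dimension, and perfectness coincides with finite projective dimension. Thus Theorem~\ref{VC}$'$ contains no additional content beyond Theorem~\ref{VC}, and the main (indeed only) obstacle is the algebraic theorem itself.
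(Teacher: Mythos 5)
Your proof is correct and matches the paper's intent exactly: the paper offers Theorem~\ref{VC}$'$ only as a geometric restatement of Theorem~\ref{VC}, and the stalk-by-stalk dictionary you describe (finite flat dimension $=$ finite projective dimension for the finitely generated $R$-module $S$, perfectness of the conormal sheaf $=$ finite projective dimension of $I/I^2$, plus the standard spreading-out of a regular sequence from a stalk to a neighbourhood over a locally noetherian scheme) is the whole content of the translation.
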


We also prove a result for the first Koszul homology, analogous to Theorem \ref{VC}.

\begin{theorema}\label{Kosthm}
Let $I$ be an ideal of finite projective dimension in a local ring $R$. If the first Koszul homology $\H_1(I;R)$ has finite projective dimension over $R/I$, then $I$ is generated by a regular sequence.
\end{theorema}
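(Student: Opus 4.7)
The plan is to adapt the strategy of Theorem \ref{VC}, with the first Koszul homology $\H_1(I;R)$ playing the role of the conormal module. Let $k$ denote the residue field of $R$, write $S=R/I$, and consider the homotopy Lie algebra $\pi=\pi^*(S|R)$. By the theorem of Avramov and Halperin recalled in the introduction, it suffices to show that every element of $\pi^2$ is radical.

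The central technical input should be a Koszul analogue of the criterion underlying Theorem \ref{VC}: a free $S$-summand of $\H_1(I;R)$ gives rise to a radical element of $\pi^2$. This is motivated by the Tate acyclic closure $R\langle X\rangle \tosim S$, where degree two divided power variables are adjoined precisely to kill $\H_1(I;R)$ and, dually, span $\pi^2$; in particular $\dim_k\pi^2$ equals the minimal number of $S$-generators of $\H_1(I;R)$. I would prove such a criterion by lifting the splitting of a free summand of $\H_1(I;R)$ to chain-level data on the acyclic closure, for instance to a graded derivation or a partial idempotent, and then extracting from this structure the vanishing of the adjoint action that characterises radicality. This is the step I expect to be the main obstacle, because $\H_1(I;R)$-classes live one homological degree higher than conormal classes, and radicality is strictly stronger than the centrality produced by Iyengar's original argument, so the bracket analysis becomes correspondingly more delicate.

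The final step combines the criterion with the finiteness hypotheses. Using the interplay of $\pd_R S<\infty$ and $\pd_S \H_1(I;R)<\infty$ via Auslander--Buchsbaum and a rigidity argument, one shows that $\H_1(I;R)$ must split off enough free $S$-summands to radicalise every element of $\pi^2$; then Avramov--Halperin concludes that $I$ is generated by a regular sequence, which in fact forces $\H_1(I;R)=0$ \emph{a posteriori}. The contrapositive framing is perhaps cleanest: if $I$ were not generated by a regular sequence, the criterion would exhibit a non-radical element of $\pi^2$, forcing $\H_1(I;R)$ to lack a free summand in the corresponding direction, and this structural defect is what the finite projective dimension hypothesis ultimately rules out.
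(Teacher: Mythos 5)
Your overall frame is right---reduce to showing every element of $\pi^2(\varphi)$ is radical and invoke the Avramov--Halperin theorem, with $\H_1(I;R)$ entering through the degree-two part of a model (the presentation $(S\otimes_A\Omega_{A/R})_3\to(S\otimes_A\Omega_{A/R})_2\to \H_1(I;R)\to 0$ in the paper plays the role of your acyclic-closure picture). But the proposed bridge between the hypothesis and radicality does not work. Your plan rests on two claims: (i) a free $S$-summand of $\H_1(I;R)$ yields a radical element of $\pi^2$, and (ii) finiteness of $\pd_S\H_1(I;R)$ forces enough free summands to radicalise all of $\pi^2$. Claim (i) is essentially Gulliksen's old result (a nonzero free summand of $\H_1(I;R)$ already forces $I$ to be a complete intersection), but claim (ii) is false: a finitely generated module of finite projective dimension over $S$ need in general have \emph{no} free summand at all (its minimal free resolution can be nontrivial), and no Auslander--Buchsbaum or rigidity argument will conjure one. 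This is exactly the gap separating the classical projective/free cases from Vasconcelos' conjecture, so reducing to free summands throws away the entire difficulty. (A side point: you also have the logical relation backwards---central elements are radical, so radicality is a \emph{weaker} conclusion than the centrality in Iyengar's argument, which is why a weaker hypothesis than a free summand can suffice.)

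What is actually needed, and what the paper proves, is a criterion that replaces ``free summand'' by ``factors through a module of finite projective dimension.'' Concretely: for $z\in\pi^2(\varphi)$ one forms the chain derivation $\theta_z=[\partial,\frac{\rm d}{{\rm d}z}]$ on the minimal model $A$, observes that its image $\overline{\theta_z}\colon S\otimes_A\Omega_{A/R}\to\m_S$ kills boundaries and hence factors through $\H_1(I;R)$, and then proves (Lemma~\ref{corelem}) that if this map factors through a finitely generated $S$-module of finite projective dimension, then $z$ is radical. The proof of that lemma lifts the factorisation through a perfect semi-free dg $A$-module $P$, writes the resulting map $P\to\m_A$ as a \emph{finite} sum $\sum x_i\sigma_i$ with $x_i\in\m_A$, and concludes that the induced derivation on a sufficiently high connected cover $A^{(n)}$ vanishes, which gives radicality via the vanishing of $\ad(z)$ on $\pi^{>n}(\varphi)$. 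When $\H_1(I;R)$ itself has finite projective dimension, every map out of it factors through itself, so the criterion applies to every $z$ with no splitting required. Without a lemma of this kind your argument cannot close.
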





When $S$ is a generically separable algebra, essentially of finite type over a field $K$, Ferrand proved that  $S$ is a reduced complete intersection if and only if the module $\Omega_{S/K}$  of K\"ahler differentials has projective dimension at most one \cite{MR219546}.  Accordingly, Vasconcelos conjectured that $S$ is a reduced complete intersection as soon as $\Omega_{S/K}$ has finite projective dimension \cite{MR508082}. This is known to hold in all of the cases mentioned above concerning the conormal module  \cite{MR1269426,brazil,MR814190}. We prove that this conjecture is a consequence of Theorem \ref{VC} and the Eisenbud-Mazur conjecture, and thereby establish a number of new cases---see Section \ref{Kahlersection}.


The key tool in our proofs is the  {homotopy Lie algebra} $\pi^*(\varphi)$, and the structure of its radical. This is a graded Lie algebra naturally associated with any local homomorphism $\varphi\colon R\to S$, so-called by analogy with its namesake in rational homotopy theory. Its use in commutative algebra has been championed by Avramov, Halperin and others \cite{MR749041,MR846435}.

Since this paper first appeared, Iyengar and the author  have built on the ideas used here to obtain new rigidity results on the cotangent complex \cite{2020arXiv201013314B}, subsuming  Theorem \ref{VC} as well as the work of Avramov  on Quillen’s conjecture \cite{MR1726700}. 

\subsection*{Outline}
After some homological background in Section \ref{prelimSection}, most of the work takes place in Section \ref{radsection}, where we analyse the radical of $\pi^*(\varphi)$. Then in Section \ref{testsection} we present various test modules which characterise complete intersections by the finiteness of their projective dimension. Theorems \ref{VC} and \ref{Kosthm} are proven here, along with our results on 
the module of K\"ahler differentials.
 

\subsection*{Acknowledgements} I am indebted to Srikanth Iyengar for his close attention to this paper from its first draft to its last, and for his many important suggestions along the way. The ideas here arose after talking to Elo\'isa Grifo and Josh Pollitz about the homotopy Lie algebra, and I am happy to thank them as well. I am also grateful to the referee for a number of important suggestions which improved the readability of this paper.

\section{Preliminaries}\label{prelimSection}

We fix once and for all a surjective local homomorphism  $\varphi\colon R\to S=R/I$ of noetherian local rings with common residue field $k$.

The maximal ideal of $R$ is $\m_R$, and the same schema will go for other local rings and local graded rings. Our notation for the degree-shift of a graded object $M$ is $(\susp M)_i=M_{i-1}$. This comes with a tautological degree $1$ map $\susp\colon M\to \susp M$.  If $M$ happens to be a graded vector space over $k$, then its graded dual is denoted $M^\vee=\Hom_k(M,k)$.

The reference \cite{MR2641236} contains everything we need about differential graded algebras and modules (henceforth dg algebras and modules), including a great deal more information on minimal models.

\subsection{Minimal models}\label{minmod} A \emph{minimal model} for $\varphi$ is a factorisation $R\to A\to S$, where $A$ is a dg $R$-algebra with the following properties
\begin{enumerate}
    \item 
    $A=R[X]$ is the free strictly graded commutative $R$-algebra on a graded set $X=X_1,X_2,...\,$, each $X_i$ being a set of degree $i$ variables;
    \item 
    the differential of $A$ satisfies $\partial(\m_A)\subseteq \m_RA+\m_A^2$;
    \item 
    $A\to S$ is a quasi-isomorphism.
\end{enumerate}
These properties determine $A$ up to an isomorphism of dg $R$-algebras over $S$ \cite{MR2641236}.  Minimal models are the local commutative algebra analogue of Sullivan models \cite{MR935009}.


More generally we consider the finite stages  $A_{(n)}=R[X_{< n}]$. The directed system $R=A_{(1)}\to A_{(2)}\to \cdots$ has $A$ as its colimit; this is analogous to the Postnikov tower in rational homotopy theory. The fibre of the inclusion $A_{(n)}\to A$ is  denoted  
\[
A^{(n)}\coloneqq A\otimes_{A_{(n)}}k=A/(\m_R,X_{<n})=k[X_{\geq n}].
\]
The first of these, $A^{(1)}$, is significant  because it is the derived fibre $S\lotimes_R k$ of $\varphi$. 

\subsection{Derivations}\label{DerChunk}
An {$R$-linear derivation} is a $R$-linear homomorphism $\theta\colon A\to M$, homogeneous of some degree $i$, satisfying the graded Leibniz rule \[
\theta(xy)=\theta(x)y+(-1)^{ij}x\theta(y)
\]
for $x$ in $A_j$ and $y$ in $A$. 

We do not assume derivations are chain maps---those that do commute with the differentials are called \emph{chain derivations}. Taken all together, the $R$-linear derivations form a complex $\Der_R(A,M)$ with differential  $\partial(\theta)=\partial_M \theta -(-1)^{|\theta|}\theta \partial_A $.

Taking  $A$ to be a minimal model as in \ref{minmod},  any $R$-linear derivation $\theta\colon A\to A$ of non-positive degree  preserves the ideal $(\m_R,X_{<n})$. This implies that $\theta$ induces a derivation on each of the fibres  $\theta^{(n)}\colon A^{(n)}\to A^{(n)}$.

\subsection{K\"{a}hler differentials}\label{KahlerChunk}
The dg $A$-module of K\"{a}hler differentials $\Omega_{A/R}$ 
is defined by the isomorphism $\Der_R(A,M)\cong \Hom_A(\Omega_{A/R},M)$, natural in the dg $A$-module $M$. This isomorphism is realised by composition with the universal $R$-linear derivation $d:A\to \Omega_{A/R}$.

We will need the following facts:
\begin{enumerate}
    \item\label{K1} 
    $\Omega_{A/R}$ is a minimal, semi-free dg $A$-module. The underlying $A$-module is free on a basis $dX_i$ in degree preserving bijection with $X_i$; see   \cite[(1.13)]{MR1269426}. 
    \item \label{K2} 
    The projection  $\Omega_{A/R}\to S\otimes_A\Omega_{A/R}$ is a quasi-isomorphism, and $S\otimes_A\Omega_{A/R}$ is a minimal complex of free $S$-modules, isomorphic as a graded $S$-module to $\bigoplus S dX_i$. This follows from (\ref{K1}).
    \item \label{K3} The connection to the conormal module is  $\H_1(\Omega_{A/R})\cong \H_1(S\otimes_A\Omega_{A/R})\cong I/I^2$; see \cite[(2.5)]{MR1269426}.
\end{enumerate}
When $R$ contains the rational numbers $S\otimes_A\Omega_{A/R}$ is none other than the cotangent complex defined by Andr\'e and Quillen \cite{MR896094}. In general, $S\otimes_A\Omega_{A/R}$ is a different object.


\subsection{The homotopy Lie algebra}\label{hoLieDef}
 The homotopy Lie algebra is encoded quite directly in the minimal model $A$ for $\varphi$, just as the rational homotopy Lie algebra of a space can be seen in its Sullivan model.  It can be constructed using the following recipe:

\begin{enumerate}
    \item 
    Let $\ind A^{(1)}\coloneqq \m_{A^{(1)}}/\m_{A^{(1)}}^{2}$ be the graded vector space of indecomposables of $A^{(1)}$, with basis $X_i$ in degree $i$. Then define
    \[
    \pi^*(\varphi)\coloneqq(\susp \ind A^{(1)})^\vee,
    \]
    so that  $\pi^i(\varphi)$ has a basis dual to $X_{i-1}$.
    \item\label{bracketdef} Let  $\ind^2 A^{(1)}\coloneqq \m_{A^{(1)}}^2/ \m_{A^{(1)}}^{3}$. This has a basis of consisting of the   monomials $xy$ with $x$ and $y$ in $X$. 
    The pairing $\langle u,x\rangle =u(\susp x)$ between $\pi^*(\varphi)$ and $\ind A^{(1)}$ extends to a pairing between $\pi^*(\varphi)\otimes \pi^*(\varphi)$ and $\ind^2 \!A^{(1)}$:
    \[
    \langle u\otimes v, xy\rangle  \coloneqq \langle v,x\rangle\langle u,y\rangle+ (-1)^{(i+1)(j+1)}  \langle u,x\rangle\langle v,y\rangle
    \]
    for $u$ in $ \pi^i(\varphi)$ and  $v$ in  $\pi^j(\varphi)$. 
    Since $A$ is a minimal model, its differential induces a map  $\overline{\partial}\colon \ind A^{(1)}\to \ind^2\!A^{(1)}$, and the bracket $\pi^*(\varphi)\otimes \pi^*(\varphi)\to \pi^*(\varphi)$ is  dual to  $\overline{\partial}$ by definition: 
    \[
     \langle [u,v] , x\rangle = (-1)^j\langle u\otimes v, \overline{\partial}(x) \rangle.
    \]
\end{enumerate}
A graded Lie algebra should also have a quadratic square operation $\pi^i(\varphi)\to \pi^{2i}(\varphi)$ defined for odd $i$; since we will not need this structure we just refer to \cite{MR2641236,MR896094}. That $\pi^*(\varphi)$ with all this structure satisfies the axioms of a graded Lie algebra was proven by Avramov \cite{MR749041,briggs2018}.

If $\varphi$ is a complete intersection then $\pi^*(\varphi)$ is an abelian Lie algebra concentrated in degree $2$; otherwise its structure is highly non-trivial.

The existence of the homotopy Lie algebra is an instance of the Koszul duality between commutative algebras and Lie algebras. It is a remarkable theorem of Avramov that the universal enveloping algebra $U\pi^*(\varphi)$ is canonically isomorphic to  $\Ext^*_{A^{(1)}}(k,k)$ \cite{MR749041,briggs2018}. This isomorphism is often used to define $\pi^*(\varphi)$, but the minimal model approach above is more suited to our needs.


The Lie subalgebra $\pi^{>n}(\varphi)$ can be obtained in the same way using $(\susp \ind A^{(n)})^\vee$. For this reason one can think of $A^{(n)}$ as the {$n$-connected cover} of $A$, by analogy with rational homotopy theory.

It follows from the definition that there is a canonical isomorphism of graded vector spaces
\[
\Der_R(A,k)\cong \susp \pi^*(A),
\]
although in these terms it is less clear how to define the Lie bracket.

\section{The radical of the homotopy Lie algebra}\label{radsection}

%

Any graded Lie algebra $L$ has an inductively defined derived series $L^{[0]}=L$, $L^{[n+1]}=[L^{[n]},L^{[n]}]$, and $L$ is called solvable if there is an $n$ such that $L^{[n]}=0$. An ideal of $L$ is called solvable if it is solvable as a Lie algebra in its own right, and the radical of $L$ is the sum of all its solvable ideals. An element of $L$ is called radical if it generates a solvable ideal, or equivalently, if it lies in the radical of $L$. See \cite{MR935009} for more information.

Henceforth our standing assumption is that $\varphi\colon R\to S$ is a surjective local homomorphism of finite projective dimension, with kernel $I$. In this context, the authors of \cite{MR935009} prove that the radical of $\pi^*(\varphi)$ is  finite dimensional. We will use the following characterisation:

    \begin{chunk}\label{RadDef}
    \emph{An element $z$ of $\pi^*(\varphi)$ is in the radical  if and only if for some $n$ the restriction ${\rm ad}(z)=[z,-]\colon\pi^{>n}(\varphi)\to \pi^{>n}(\varphi)$ is zero}.
    \end{chunk}
    
    One direction is elementary: if we assume $[z,\pi^{>n}(\varphi)]=0$ then the derived series of the ideal $(z)$ will terminate before $n$ steps, and $(z)$ is solvable. The other direction uses  \cite[Theorem~C]{MR935009}, which implies that if $z$ is radical then it generates a finite dimensional ideal, so clearly $[z,\pi^{>n}(\varphi)]=0$ for $n$ large enough. The reader should treat \ref{RadDef} as our definition of the radical.


\begin{theorem}[Avramov, Halperin {\cite[Theorem C]{MR896094}}]\label{AHthm} Assuming that $\pd_RS$ is finite, 
$\varphi\colon R\to S$ is a complete intersection homomorphism if and only if every element of $\pi^2(\varphi)$ is  radical.\qed
\end{theorem}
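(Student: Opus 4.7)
The backward direction is immediate from the definitions. When $\varphi$ is a complete intersection, $I$ is generated by a regular sequence $f_1,\ldots,f_c$, and the Koszul complex $R\langle e_1,\ldots,e_c\mid \partial(e_i)=f_i\rangle$ already resolves $S$ and fulfils conditions (1)-(3) of \ref{minmod}. Hence the minimal model is $A=R[X_1]$ with $X_{\geq 2}=\varnothing$, so $\pi^i(\varphi)=0$ for $i\neq 2$. The bracket $\pi^2\otimes\pi^2\to \pi^3=0$ is forced to vanish, so $\pi^*(\varphi)$ is abelian concentrated in degree $2$. Every element trivially generates a solvable (in fact abelian) ideal.

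For the forward direction, assume every $z\in \pi^2(\varphi)$ is radical; I want to deduce $X_{\geq 2}=\varnothing$ in the minimal model. Since $\pi^2(\varphi)^{\vee}\cong \susp X_1$ and $I$ is finitely generated, $\pi^2(\varphi)$ is finite-dimensional. Choosing a basis $z_1,\ldots,z_r$ of $\pi^2(\varphi)$ and applying \ref{RadDef} to each $z_j$ with bound $n_j$, I would take $n=\max_j n_j$ so that $[\pi^2(\varphi),\pi^{>n}(\varphi)]=0$ uniformly. Translating this back through the bracket formula in \ref{hoLieDef}(\ref{bracketdef}), this says that for every variable $y\in X_{\geq n}$, the quadratic part $\overline{\partial}(y)\in \ind^2\!A^{(1)}$ contains no term of the form $xy'$ with $x\in X_1$. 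In other words, the coaction of $X_1$ on the higher variables via $\overline{\partial}$ eventually disappears.

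The main obstacle is turning this ``eventual vanishing'' statement into the far stronger conclusion that $X_{\geq 2}=\varnothing$. The plan is to invoke the structural dichotomy of Félix-Halperin-Thomas in the form used in \cite{MR896094}: under the standing hypothesis $\pd_R S<\infty$, the radical of $\pi^*(\varphi)$ is finite-dimensional, and $\pi^*(\varphi)$ is either concentrated in degree $2$ (the complete intersection case) or grows exponentially in a controlled fashion. Since the hypothesis forces all of $\pi^2(\varphi)$ into the radical, I expect the finite-dimensionality of the radical, combined with the fact that $\pi^2(\varphi)$ generates a polynomial subalgebra of $U\pi^*(\varphi)\cong \Ext^*_{A^{(1)}}(k,k)$ acting on the rest, to preclude the exponential alternative. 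The hard part is precisely this: promoting the centraliser condition $[\pi^2,\pi^{>n}]=0$ to a global triviality statement for $\pi^{\geq 3}$. Given the depth of this input, in practice I would cite the theorem from \cite{MR896094} rather than reprove it, and this is what the excerpt does with the $\qed$ immediately following the statement.
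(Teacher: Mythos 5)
Your proposal is correct and matches the paper's treatment: the paper does not prove this theorem itself but cites it as Theorem C of Avramov--Halperin \cite{MR896094} (the statement carries an immediate end-of-proof marker), which is exactly the conclusion you reach for the hard forward direction. Your elementary verification of the backward direction via the Koszul complex as minimal model is also correct (modulo the harmless slip that $[\pi^2,\pi^2]$ lands in $\pi^4$ rather than $\pi^3$, which is likewise zero), and it agrees with the paper's remark in \ref{hoLieDef} that $\pi^*(\varphi)$ is abelian and concentrated in degree $2$ for complete intersections.
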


This 
is ultimately how we will establish the complete intersection property. In a future paper we will give a new proof of Theorem \ref{AHthm}, explaining its connection to the cohomological support varieties of Avramov and Buchweitz \cite{MR1794064} and Jorgensen~\cite{MR1972252}. 


\begin{construction}\label{thetaCon}
We fix a minimal model $A$ for $\varphi$ as in \ref{minmod}. An element $z$ in $ \pi^i(\varphi)$ can be thought of as an $R$-linear map $RX_{i-1}\to k$. Lift this to an $R$-linear map $\widetilde{z}\colon RX_{i-1}\to R$. Then, in rough terms, one can differentiate with respect to $\widetilde{z}$ to obtain a degree $1-i$  
 derivation $\dd{z}\colon A\to A$. More precisely, this is the unique $R$-linear derivation which agrees with $\widetilde{z}$ when restricted to the variables; that is, $\dd{z}(x)= \widetilde{z}(x)$ if $x\in X_{i-1}$ while $\dd{z}(x)=0$ if $x\in X_{j}$ for some $j\neq i-1$, and on general elements $\dd{z}$ is determined by the Leibniz rule. This need not be a chain derivation. The boundary 
 \[
 \theta_z\coloneqq [\partial, {\textstyle \dd{z}}]
 \]
 is a degree $-i$ 
 derivation $A\to \m_A$; the fact that $\theta_z$ lands in $\m_A$ will be important later.
 
 Associated to the exact sequence $0\to \m_A\to A \to k\to 0$ there is a connecting homomorphism $\Der_R(A,k)\to \susp \Der_R(A,\m_A)$. Making the identification $\Der_R(A,k)\cong \susp \pi^*(A)$, the assignment $z\mapsto \theta_z$ coincides with this connecting homomorphism, and one can use this as an alternative definition for $\theta_z$. In particular, it follows that up to homotopy $\theta_z$ does not depend on the choice of lift $\widetilde{z}$.
\end{construction}

The next proposition appears implicitly in the proof of \cite[Proposition 4.2]{MR896094}.

\begin{proposition}\label{adProp}
Let $\theta$ be an $R$-linear derivation $A\to \m_A$ of degree $i\leq 0$. By passing to indecomposables, shifting and dualising, the derivation $\theta^{(1)}\colon A^{(1)}\to~A^{(1)}$ induces a degree $i$ map $\pi^*(\varphi)\to \pi^*(\varphi)$. When $\theta=\theta_z$, as in Construction \ref{thetaCon}, this map coincides with $-{\rm ad}(z)$.
\end{proposition}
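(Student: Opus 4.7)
My plan has two parts: first, construct the induced map on $\pi^*(\varphi)$ for a general derivation $\theta$; second, specialise to $\theta=\theta_z$ and compare with the definition of the bracket to identify the result with $-\ad(z)$.

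For the construction, note that $R$-linearity of $\theta$ gives $\theta(\m_R A)\subseteq \m_R A$, so $\theta$ descends to a derivation $\theta^{(1)}\colon A^{(1)}\to A^{(1)}$ on the fibre $A^{(1)}=A/\m_R A=k[X]$. The assumption $\theta(A)\subseteq \m_A$ forces $\theta^{(1)}(A^{(1)})\subseteq \m_{A^{(1)}}$, and then the Leibniz rule automatically gives $\theta^{(1)}(\m_{A^{(1)}}^2)\subseteq \m_{A^{(1)}}^2$. Thus $\theta^{(1)}$ induces a $k$-linear map $\overline{\theta}\colon \ind A^{(1)}\to \ind A^{(1)}$, and shifting and dualising gives $(\susp\overline{\theta})^\vee\colon \pi^*(\varphi)\to \pi^*(\varphi)$, characterised by $\langle(\susp\overline{\theta})^\vee(v),\susp x\rangle=\pm\langle v,\susp\overline{\theta}(x)\rangle$ for $v\in\pi^*(\varphi)$ and $x\in \ind A^{(1)}$.

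For the computation with $\theta_z=[\partial,\dd{z}]$, evaluate on a variable $x\in X_j$. Since $\dd{z}(x)$ lies in $R$ and $R$ has trivial differential, $\partial\dd{z}(x)=0$, so $\theta_z(x)=\pm\dd{z}(\partial x)$. Minimality places $\partial x$ in $\m_R A+\m_A^2$: $R$-linearity of $\dd{z}$ kills the $\m_R A$ part modulo $\m_R$, and Leibniz applied to a product of three factors in $\m_{A^{(1)}}$ gives $\dd{z}(\m_{A^{(1)}}^3)\subseteq \m_{A^{(1)}}^2$. Therefore $\overline{\theta_z}(x)$ in $\ind A^{(1)}$ is computed by applying $\dd{z}$ to $\overline{\partial}(x)\in \ind^2 A^{(1)}$ and reading off the linear part. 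Writing $\overline{\partial}(x)=\sum x_k x_l$ and expanding via Leibniz, the resulting expression, paired against any $v\in\pi^*(\varphi)$, reproduces exactly the pairing $\langle z\otimes v,\overline{\partial}(x)\rangle$ from \ref{bracketdef}: the Koszul sign introduced by Leibniz is matched by the Koszul sign in that pairing, and the fact that $\dd{z}$ evaluates $\widetilde{z}$ on variables in $X_{|z|-1}$ and vanishes elsewhere makes all the entries appear correctly. Combined with $\langle[z,v],x\rangle=(-1)^{|v|}\langle z\otimes v,\overline{\partial}(x)\rangle$, this identifies the induced map with $-\ad(z)$.

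The substance of the argument is fairly direct, so the main obstacle is sign bookkeeping: the Koszul signs from Leibniz, the sign $(-1)^{|\dd{z}|}$ implicit in $\theta_z=\partial\dd{z}-(-1)^{|\dd{z}|}\dd{z}\partial$, the sign in the bracket formula, and the signs introduced when shifting and dualising all need to be combined in a single consistent convention for the overall sign to collapse to $-1$ rather than $+1$. Any careless ordering of these operations produces the wrong sign, which is why the proposition is worth stating explicitly even though the underlying calculation is mechanical.
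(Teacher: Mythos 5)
Your proposal is correct and follows essentially the same route as the paper: descend to indecomposables using $\theta(A)\subseteq\m_A$ and the Leibniz rule, observe that $\theta_z$ on a variable reduces to $\dd{z}$ applied to the quadratic part of the differential, and match the result against the defining pairing for the bracket. The sign identity you assert at the end is exactly the identity $\langle z\otimes v,xy\rangle=(-1)^{(i+1)(j+1)}\langle v,\dd{z}(xy)\rangle$ that the paper's proof also states as "a computation," so your level of detail matches the original.
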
    

\begin{proof}
From the fact that 
$\theta(A)\subseteq \m_{A}$ it follows that $\theta(\m^n_{A})\subseteq \m_{A}^{n}$, and so as well  $\theta^{(1)}(\m^n_{A^{(1)}})\subseteq \m_{A^{(1)}}^{n}$. Hence $\theta^{(1)}$ does induce a map on the indecomposables $\m_{A^{(1)}}/\m_{A^{(1)}}^2$. For the second assertion, a computation using (\ref{hoLieDef}.\ref{bracketdef}) shows 
\[
\langle z\otimes v ,xy\rangle= (-1)^{(i+1)(j+1)}\langle v, {\textstyle \dd{z}}(xy)\rangle
\]
for any $v$ in $ \pi^j(\varphi)$ and $x, y$ in $\m_{A^{(1)}}$. In particular $\langle [z,v], x\rangle $ is
\begin{align*}
    (-1)^j\langle z\otimes v ,\partial(x)\rangle & = (-1)^{1+i(j+1)} \langle v , {\textstyle\dd{z}}(\overline{\partial} x)\rangle \\
    & = (-1)^{1+i(j+1)} \langle v , (\ind \theta_z^{(1)})(x)\rangle\\
    & = -\langle (\ind \theta_z^{(1)})^\vee (v),x\rangle,
\end{align*}
so $ (\ind \theta_z^{(1)})^\vee (v)= -[z,v]$, as was to be shown.
\end{proof}


\begin{lemma}\label{thetaradlem}
Let $\theta,\theta'\colon A\to \m_A$ be  $R$-linear chain derivations of non-positive degree which are cohomologous  inside $\Der_R(A,\m_A)$. Then 
the maps induced on $\pi^*(\varphi)$ by $\theta^{(1)}$ and ${\theta'}^{(1)}$ are the same.

In particular, if there is a derivation $\theta$, cohomologous with $\theta_z$ inside $\Der_R(A,\m_A)$, and an integer $n$ such that the induced derivation $\theta^{(n)}\colon A^{(n)}\to A^{(n)}$ is zero, then $z$ is in the radical of $\pi^*(\varphi)$.
\end{lemma}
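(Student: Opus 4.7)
The plan is to reduce both assertions to a computation on the indecomposables of $A^{(1)}$, exploiting the fact that minimality forces $\partial$ to act as zero on $\ind A^{(1)} = \m_{A^{(1)}}/\m_{A^{(1)}}^2$.

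For the first assertion, I would write $\theta - \theta' = [\partial, \sigma]$ for some $R$-linear derivation $\sigma \colon A \to \m_A$ and reduce modulo $\m_R$ to obtain $\theta^{(1)} - {\theta'}^{(1)} = [\partial, \sigma^{(1)}]$ inside $\Der_k(A^{(1)}, \m_{A^{(1)}})$. Because $\theta^{(1)}$, ${\theta'}^{(1)}$, and $\sigma^{(1)}$ all take values in $\m_{A^{(1)}}$---this is where the hypothesis that the original derivations land in $\m_A$ is used---each preserves the filtration $\m_{A^{(1)}} \supseteq \m_{A^{(1)}}^2$ and descends to a map on $\ind A^{(1)}$. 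The commutator $[\partial, \sigma^{(1)}]$ then also descends, and vanishes because $\partial$ is zero on $\ind A^{(1)}$ by minimality. Therefore $\ind \theta^{(1)} = \ind {\theta'}^{(1)}$, and dualising and shifting produces equal maps on $\pi^*(\varphi)$.

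For the second assertion, I would apply the first part with $\theta' = \theta_z$ (both have non-positive degree, since $z \in \pi^{i}(\varphi)$ with $i \geq 1$) and invoke Proposition \ref{adProp} to identify the map on $\pi^*(\varphi)$ induced by $\ind \theta^{(1)}$ with $-\ad(z)$. Using $A^{(n)} = A^{(1)}/(X_{<n})$, the induced map $\ind \theta^{(n)}$ on $\ind A^{(n)}$ is the composite $X_{\geq n} \hookrightarrow \ind A^{(1)} \xrightarrow{\ind \theta^{(1)}} \ind A^{(1)} \twoheadrightarrow \ind A^{(n)}$, so the hypothesis $\theta^{(n)} = 0$ forces $\ind \theta^{(1)}(X_{\geq n})$ to lie in the span of $X_{<n}$. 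Since $\ind \theta^{(1)}$ has non-positive degree, it also preserves the span of $X_{<n}$ for degree reasons. Dualising, any $v \in \pi^{>n}(\varphi) \subseteq \pi^*(\varphi)$, viewed as a functional on $\ind A^{(1)}$ vanishing on $X_{<n}$, therefore satisfies $-\ad(z)(v) = (\ind \theta^{(1)})^\vee(v) = 0$; so $\ad(z)|_{\pi^{>n}(\varphi)} = 0$, and $z$ is radical by \ref{RadDef}.

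The main obstacle is the bookkeeping around the finite stage $A^{(n)}$: one might hope to mirror the first part's argument directly at level $n$, but this would require the witness $\sigma$ to descend to $A^{(n)}$, which can fail in positive degrees since Section \ref{DerChunk} only guarantees descent for derivations of non-positive degree. I sidestep this by computing everything at level $1$ and extracting the statement about $\pi^{>n}(\varphi)$ from the filtration structure on $\ind A^{(1)}$.
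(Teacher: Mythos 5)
Your proposal is correct and follows essentially the same route as the paper: the first assertion comes down to the observation that a commutator $[\partial,\sigma]$ with $\sigma$ landing in $\m_A$ carries $\m_{A^{(1)}}$ into $\m_{A^{(1)}}^2$ because $\partial$ vanishes on indecomposables by minimality, and the second assertion combines this with Proposition \ref{adProp} and the criterion \ref{RadDef}. Your explicit identification of $\ind\theta^{(n)}$ as the composite through $\ind A^{(1)}$, together with the degree argument showing $\ind\theta^{(1)}$ preserves the span of $X_{<n}$, is just a spelled-out version of the paper's one-line claim that $\theta^{(n)}$ induces the restriction of $-\ad(z)$ to $\pi^{>n}(\varphi)$.
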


\begin{proof}
If $\theta-\theta'=[\partial, \sigma]$  for some derivation $\sigma\colon A\to\m_A$, then $(\theta-\theta')(\m_A)\subseteq \m_A^2$. Hence $\theta$ and $\theta'$ induce the same map on $\ind A^{(1)}$, and then as well on $\pi^*(\varphi)=(\susp \ind A^{(1)})^\vee$.

For the second statement, it follows that $\theta^{(1)}$ induces the map $-\ad(z)$ on $\pi^*(\varphi)$ by Proposition \ref{adProp}. So as well $\theta^{(n)}$ induces the restriction of $-\ad(z)$ to $\pi^{>n}(\varphi)$. If $\theta^{(n)}=0$ then $\ad(z)$ vanishes  on $\pi^{>n}(\varphi)$, so $z$ lies in the radical by \ref{RadDef}.
\end{proof}

Together, the content of Construction \ref{thetaCon}, Proposition \ref{adProp} and Lemma \ref{thetaradlem} can be summarised as follows: there is an assignment  $\Der_R(A, \m_A) \to {\rm End}_k(\pi^* (\phi))$ which induces a well-defined map in cohomology, and moreover the composition
\[
\pi^{*}(\phi) \cong \H^{*-1}(\Der_R(A, k)) \xrightarrow{\ \delta\ } \H^{*}(\Der_R(A, \m_A))\longrightarrow {\rm End}_k(\pi^* (\phi))
\]
sends $z$ to $-\ad(z)$, where $\delta$ is the connecting homomorphism from Construction \ref{thetaCon}.


For the rest of this paper we focus on $\pi^2(\varphi)$. So take $z$ in $\pi^2(\varphi)$ and consider the derivation $\theta_z$ from construction \ref{thetaCon}. 
We consider as well the derivation $\overline{\theta_z}\colon A\to \m_S$ obtained by composing $\theta_z$ with the surjective quasi-isomorphism $\m_A\to \m_S$.


We come to our core technical lemma.

\begin{lemma}\label{corelem}
Suppose there is an $S$-module $M$, finitely generated and of finite projective dimension, and a factorisation  $A \to M\to \m_S$ of $\overline{\theta_z}$, for some $R$-linear chain derivation $A \to M$ and $S$-module map $M\to \m_S$. Then $z$ is radical in $\pi^*(\varphi)$.
\end{lemma}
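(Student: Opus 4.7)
My plan is to apply Lemma \ref{thetaradlem} by producing a chain derivation $\theta \colon A \to \m_A$ cohomologous to $\theta_z$ such that $\theta^{(n)}=0$ for some $n$. The idea is that the finite projective dimension of $M$ localises the relevant cohomology class, up to coboundary, onto a bounded range of the variables $X_i$.

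The $R$-linear chain derivation $\eta\colon A\to M$ corresponds, via the universal property of $\Omega_{A/R}$, to a dg $A$-module chain map $\Omega_{A/R}\to M$ of degree $-2$; since $M$ is an $S$-module this factors through $F\coloneqq S\otimes_A \Omega_{A/R}$, giving $\psi\colon F\to M$. Recall from (\ref{KahlerChunk}.\ref{K2}) that $F$ is a minimal complex of free $S$-modules on generators $\{dX_i\}_{i\geq 1}$ with $dX_i$ in homological degree $i$.

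Set $p\coloneqq \pd_S M$ and choose a projective resolution $Q\to M$ over $S$ concentrated in degrees $[0,p]$. Because $F$ consists of projective $S$-modules, $\psi$ lifts to a chain map $\widetilde\psi\colon F\to Q$ of degree $-2$, whose components $F_i\to Q_{i-2}$ must vanish for $i>p+2$ simply because the target is then zero. Composing $\widetilde\psi$ with $Q\to M\xrightarrow{\pi}\m_S$ and pulling back through the universal property, I obtain a chain derivation $\theta'\colon A\to \m_S$ cohomologous to $\overline{\theta_z}$ in $\Der_R(A,\m_S)$, with $\theta'(X_i)=0$ for every $i>p+2$.

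Since $\Omega_{A/R}$ is semi-free and $\m_A\to \m_S$ is a surjective quasi-isomorphism of dg $A$-modules, the induced map $\Der_R(A,\m_A)\to \Der_R(A,\m_S)$ is a surjective quasi-isomorphism of complexes. Lifting $\theta'$ yields a chain derivation $\theta_0\colon A\to \m_A$ cohomologous to $\theta_z$, with $\theta_0(X_i)$ landing in $K\coloneqq \ker(\m_A\to \m_S)$ whenever $i>p+2$; the complex $K$ is acyclic. The final step is to modify $\theta_0$ further by a coboundary in $\Der_R(A,\m_A)$: using the acyclicity of $K$, I would inductively construct a degree $-1$ $A$-linear map $\sigma\colon \Omega_{A/R}\to \m_A$ so that $\theta\coloneqq \theta_0-[\partial,\sigma]$ satisfies $\theta(X_i)\in (\m_R, X_{<n})$ for all $i\geq n$, taking $n$ of the order $p+3$. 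Then $\theta^{(n)}=0$, and Lemma \ref{thetaradlem} gives $z\in \rad \pi^*(\varphi)$. The main obstacle is this last inductive construction: at each stage the obstruction to solving for $\sigma(dX_i)$ must be verified to be a cycle in $K$, which amounts to a Leibniz-rule computation that uses the chain condition already satisfied by $\theta_0$.
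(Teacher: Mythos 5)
Your reduction to Lemma \ref{thetaradlem} is the right target, and the first moves (passing to $F=S\otimes_A\Omega_{A/R}$, lifting back through the surjective quasi-isomorphism $\m_A\to\m_S$) are sound. But the way you invoke $\pd_S M<\infty$ extracts no information. The module $\m_S$ is concentrated in homological degree $0$, so \emph{every} degree $-2$ chain map $F\to\m_S$ has vanishing components $F_i\to(\m_S)_{i-2}$ for $i\neq 2$; composing $\widetilde\psi$ with the augmentation $Q\to M$ only ever uses $Q_0$, so the bound $i>p+2$ is vacuous. Likewise, the kernel $K=\ker(\m_A\to\m_S)$ satisfies $K_j=(\m_A)_j=A_j$ for all $j\geq 1$, so the condition $\theta_0(dX_i)\in K$ is automatic for $i\geq 3$ and constrains nothing. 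By the time you reach the ``final inductive construction,'' you hold a derivation $\theta_0$ about which you know only what is true for an arbitrary $z\in\pi^2(\varphi)$. That last step therefore cannot succeed in general: if it did, the argument would show every element of $\pi^2(\varphi)$ is radical whenever $\pd_RS<\infty$, and by Theorem \ref{AHthm} every ideal of finite projective dimension would be generated by a regular sequence, which is false. The obstruction you flag (that $\theta_0(dX_i)$ be a cycle in $K$) is not a routine Leibniz verification; it is the entire content of the lemma, and the hypothesis on $M$ must enter there.

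The missing idea is to keep the finiteness of $M$ alive at the level of $\m_A$ rather than $\m_S$. The paper resolves $M$ by a semi-free dg $A$-module $P$ whose underlying graded $A$-module is finitely generated and free (possible precisely because $M$ is perfect over $S\simeq A$), and lifts the whole factorisation through $P$: one gets $\beta\colon\Omega_{A/R}\to P$ and $\alpha\colon P\to\m_A$ with $\theta\coloneqq\alpha\beta$ cohomologous to $\theta_z$. Finite generation of $P$ gives $\Hom_A(P,\m_A)\cong\m_A\otimes_A\Hom_A(P,A)$, so $\alpha=\sum x_i\sigma_i$ is a \emph{finite} sum with $x_i\in\m_A$; choosing $n$ above the degrees of the $x_i$ forces the image of $\theta$ into $(\m_R,X_{<n})A$, whence $\theta^{(n)}=0$ with no further correction needed. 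In short, the finite projective dimension must be used to bound where the lift lands inside $\m_A$, not to bound the components of a map into the degree-zero module $\m_S$.
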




\begin{proof} 
Let $P$ be a semi-free dg $A$-module resolution of $M$ (\cite{MR2641236} contains all the facts we use about semi-free dg modules). Since $M$ has finite projective dimension over $S$, we can assume that the underlying $A$-module of $P$ is finitely generated and free (i.e.\  it is perfect, see \cite{MR2592508,MR2729016}).

We identify derivations with the corresponding maps out of the dg module of K\"{a}hler differentials. In these terms, our hypotheses give us the commuting outer square of the following diagram of dg $A$-modules
\[
\begin{tikzcd}
    \Omega_{A/R} \ar[rr,"\theta_z"] \ar[dd,two heads,"\simeq"'] \ar[rd,dashed,"\beta"]& & \m_A\ar[dd,two heads,"\simeq"]\\
    & P\ar[d,two heads,"\simeq"] \ar[ur,dashed,"\alpha"] & \\
    S\otimes_A\Omega_{A/R} \ar[r] & M\ar[r] & \m_S.
\end{tikzcd}
\]
Since $\Omega_{A/R}$ and $P$ are semi-free, we can construct lifts $\alpha$ and $\beta$ as shown, making the two trapezia commute. Since the vertical maps are quasi-isomorphisms, the triangle commutes up to homotopy, meaning that $\theta_z$ is cohomologous with $\theta\coloneqq\alpha\beta$.

Since $P$ is  finitely generated and free over $A$
\[
\Hom_A(P,\m_A)\cong \m_A \otimes_A \Hom_A(P,A).
\]
Hence we may write $\alpha$ as a \emph{finite} sum $\alpha=\sum x_i \sigma_i$, for some $x_i$ in $\m_A$ and some $\sigma_i\colon P\to A$ (the $\sigma_i$ need not be chain maps).  We obtain a decomposition $\theta=\sum x_i \sigma_i \beta$. Let $n$ be larger than all of the degrees of the $x_i$ appearing in this sum. Then the induced dg module map $\Omega_{A^{(n)}/k}\to A^{(n)}$ is zero. Therefore the corresponding derivation $
\theta^{(n)}\colon A^{(n)}\to A^{(n)}$ is zero, and  $z$ is in the radical of $\pi^*(\varphi)$ by Lemma~\ref{thetaradlem}.
\end{proof}

Iyengar proved in \cite{MR1707520} that free $S$-module summands of $I/I^2$ give rise to central elements of $\pi^2(\varphi)$. Our next result is a direct analogue for radical elements. There is a natural correspondence
\[
\pi^2(\varphi)\cong \Hom_R(RX_1,k)\cong \Hom_R(I,k)\cong \Hom_S(I/I^2,k)
\]
which we make use of in the statement.

\begin{theorem}\label{radelt}
Let $z$ be in $\pi^2(\varphi)$. If the corresponding homomorphism $I/I^2\to k$ factors through a finitely generated $S$-module $N$ of finite projective dimension, then $z$ is in the radical of $\pi^*(\varphi)$.
\end{theorem}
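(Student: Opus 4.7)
The plan is to apply Lemma \ref{corelem}: I will construct a finitely generated $S$-module $M$ of finite projective dimension, an $R$-linear chain derivation $\alpha \colon A \to M$ of degree $-2$, and an $S$-linear map $\beta \colon M \to \m_S$ satisfying $\beta\alpha = \overline{\theta_z}$. By that lemma this will immediately give $z$ in the radical of $\pi^*(\varphi)$.

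To set up the construction, choose a finite free $S$-resolution $F_\bullet \to N$, padding with split acyclic pieces if needed so that $F_0$, $F_1$, and $F_2$ are all nonzero. Using the semi-freeness of $\Omega_{A/R}$ and the identification $I/I^2 \cong \H_1(\Omega_{A/R})$, lift $f$ to a chain map of dg $A$-modules $\widetilde{\phi}\colon \Omega_{A/R} \to F[-1]$, where $F[-1]$ places $N = \H_0(F)$ in homological degree $1$ alongside $\H_1(\Omega_{A/R})$. Such a lift exists by the usual inductive obstruction argument: at each stage the obstruction is a cycle in $F$ because $\partial_\Omega^2 = 0$, hence a boundary by exactness of $F$ in positive degrees. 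In parallel, pick an $S$-linear lift $g_0 \colon F_0 \to S$ of $F_0 \twoheadrightarrow N \xrightarrow{g} k$ and let $g_i := g_0 \circ \partial_F^i \colon F_i \to S$; for $i \geq 1$ these maps take values in $\m_S$ since $F_i \to F_0 \to N \to k$ is zero.

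Now take $M := \coker(\partial_F \colon F_2 \to F_1)$, the first syzygy of $N$. This is finitely generated, with $\pd_S M = \max(\pd_S N - 1, 0) < \infty$. The map $g_1$ descends to $\beta \colon M \to \m_S$ because $g_1 \partial_F = g_0 \partial_F^2 = 0$. Define $\alpha \colon A \to M$ on the generators $y \in X_2$ by $\alpha(y) := [\widetilde{\phi}(dy)] \in F_1 / \partial_F F_2$ and extend by the Leibniz rule; by degree, and because elements of $X_{\geq 1}$ act as zero on $M$ via $A \to S$, the derivation $\alpha$ vanishes automatically outside $R X_2 \subseteq A_2$. The chain condition $\alpha(\partial w) = 0$ for $w \in X_3$ is immediate from the chain identity $\partial_F \widetilde{\phi} = \widetilde{\phi} \partial_\Omega$: this places $\widetilde{\phi}(d\partial w) = \partial_F \widetilde{\phi}(dw) \in \partial_F F_2$, hence zero in $M$.

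The last step is to arrange $\beta\alpha = \overline{\theta_z}$ on the nose. Both sides are degree $-2$ derivations vanishing outside $A_2$, so it suffices to compare on $X_2$: for $y \in X_2$ with $\partial y = \sum r_i x_i$ (where $r_i \in \m_R$, $x_i \in X_1$), one finds $\beta\alpha(y) = g_0(\partial_F \widetilde{\phi}(dy)) = \sum \bar r_i \, g_0(\widetilde{\phi}(dx_i))$ and $\overline{\theta_z}(y) = \sum \bar r_i \, \overline{\widetilde{z}(x_i)}$. Prescribing the lift $\widetilde{z}$ in Construction \ref{thetaCon} so that $\overline{\widetilde{z}(x)} = g_0(\widetilde{\phi}(dx)) \in S$ for every $x \in X_1$---permissible because $R \twoheadrightarrow S$---produces the strict equality. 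The main obstacle I anticipate is precisely this alignment of choices: cohomologically the identity is forced by functoriality, but Lemma \ref{corelem} demands equality on the nose, so $\widetilde{z}$ must be pinned down through $\widetilde{\phi}$ and $g_0$ rather than chosen independently.
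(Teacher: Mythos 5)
Your proof is correct and follows essentially the same route as the paper: both arguments reduce to Lemma \ref{corelem} by factoring $\overline{\theta_z}$ strictly through the first syzygy $M$ of $N$, using the minimal presentation $(S\otimes_A\Omega_{A/R})_2\to(S\otimes_A\Omega_{A/R})_1\to I/I^2\to 0$ and pinning down the lift $\widetilde z$ at the end so that the factorisation holds on the nose rather than merely up to homotopy. The only difference is cosmetic: you lift to a full chain map $\Omega_{A/R}\to F[-1]$ from a resolution of $N$, whereas the paper only constructs the two lifts needed in degrees $1$ and $2$.
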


\begin{proof}
Take an exact sequence $0\to M\to F\to N\to0$ with $F$ a finitely generated free $S$-module and $M\subseteq \m_S F$. By definition $M$ is the first syzygy of $N$, so $\pd_S M$ is finite as well. Recall that by \cite[(2.5)]{MR1269426} there is a minimal presentation
\[
(S\otimes_A\Omega_{A/R})_2\xrightarrow{\partial} (S\otimes_A\Omega_{A/R})_1\to I/I^2\to 0.
\]
We can construct the following commuting diagram
\[
\begin{tikzcd}
    (S\otimes_A\Omega_{A/R})_2 \ar[d,"\partial"] \ar[r,"\beta",dashed] & M \ar[d]\ar[r,"\alpha",dashed] & \m_S \ar[d]\\
    (S\otimes_A\Omega_{A/R})_1 \ar[r,"\delta",dashed]\ar[d] & F \ar[r,"\gamma",dashed]\ar[d]\ar[d] & S \ar[d]\\
    I/I^2 \ar[r] & N \ar[r] & k.
\end{tikzcd}
\]
The lower row is the given factorisation, and the other horizontal maps exist by standard lifting properties. We may take $\widetilde{z}$ in construction \ref{thetaCon} so that $S\otimes_R\widetilde{z}=\gamma\delta$, which implies $\overline{\theta_z}=\alpha\beta$ factors through $M$. By Lemma \ref{corelem} this means $z$ is in the radical of $\pi^*(\varphi)$.
\end{proof}

In the case that $N$ is projective, $z$ corresponds to a free summand of $I/I^2$. The proof shows that in this case $\ad(z)=0$ (since $M=0$), so that $z$ is central, and we recover the result of Iyengar \cite{MR1707520}.

\section{Test modules}
\label{testsection}





\subsection{The conormal module} Theorem \ref{VC} from the introduction can be proven locally, so it is a consequence of the next result. This settles the conjecture of Vasconcelos on the conormal module.


\begin{theorem}\label{sharpVC}
Let $\varphi\colon R\to S=R/I$ be a surjective local homomorphism, and let $k$ be the common residue field.
If there is an $S$-module $N$ of finite projective dimension, and a homomorphism $\alpha\colon I/I^2\to N$ such that $\alpha \otimes_S k$ is injective, then $\varphi$ is a complete intersection.
\end{theorem}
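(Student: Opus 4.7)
The plan is to combine the Avramov--Halperin criterion (Theorem \ref{AHthm}) with the radical-producing result of Section \ref{radsection} (Theorem \ref{radelt}). It suffices to show that every element of $\pi^2(\varphi)$ is radical, and by Theorem \ref{radelt} this amounts, for each $z \in \pi^2(\varphi) \cong \Hom_S(I/I^2, k)$, to exhibiting a factorization of the corresponding map $I/I^2 \to k$ through a finitely generated $S$-module of finite projective dimension. The hypothesis is tailor-made for this: $N$ will serve as the common intermediate module and $\alpha$ as the common map into it, for every $z$ at once.

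The key step is a simple dualisation. Using the natural adjunction $\Hom_S(-, k) \cong \Hom_k(- \otimes_S k, k)$, the precomposition map $\alpha^* \colon \Hom_S(N, k) \to \Hom_S(I/I^2, k)$ is identified with the $k$-linear dual of $\alpha \otimes_S k$. Since the latter map is injective by hypothesis and $k$ is a field, its dual is surjective. Hence every $z \colon I/I^2 \to k$ lifts to some $\beta \colon N \to k$ with $z = \beta \circ \alpha$, giving the desired factorization of $z$ through $N$. Applying Theorem \ref{radelt} places $z$ in the radical of $\pi^*(\varphi)$, and since $z \in \pi^2(\varphi)$ was arbitrary, Theorem \ref{AHthm} then yields the complete intersection conclusion.

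The main technical point, rather than a real obstacle, is checking the fine print on the hypotheses. Theorem \ref{radelt} wants the intermediate module to be finitely generated, so one will likely reduce to this by replacing $N$ with a finitely generated submodule containing $\alpha(I/I^2)$ together with enough elements to preserve finite projective dimension; this uses that over a noetherian local ring finite projective dimension can be witnessed by finitely generated data. Theorem \ref{AHthm} requires $\pd_R S < \infty$, which is the standing assumption of Section \ref{radsection} and is in force here. Conceptually, the content of the argument is simply the observation that the injectivity of $\alpha \otimes_S k$ is \emph{exactly} the condition needed to force \emph{every} degree-two homotopy class through a common factorization, thereby forcing \emph{every} such class into the radical at once.
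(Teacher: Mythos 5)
Your proposal is correct and follows the paper's own route exactly: identify $\pi^2(\varphi)$ with $\Hom_S(I/I^2,k)$, factor every such map through $\alpha$ (the dualisation via $\Hom_S(-,k)\cong\Hom_k(-\otimes_S k,k)$ that you spell out is precisely what the paper leaves implicit in ``the hypotheses imply every homomorphism $I/I^2\to k$ factors through $\alpha$''), and then invoke Theorem \ref{radelt} followed by Theorem \ref{AHthm}. The only caution is in your ``fine print'': replacing $N$ by a finitely generated submodule would not obviously preserve finite projective dimension, but this is a non-issue since $N$ is (as in the paper's applications) taken finitely generated, and the standing assumption $\pd_R S<\infty$ is indeed in force.
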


\begin{proof}
The hypotheses imply every homomorphism $I/I^2\to k$ factors through $\alpha$, so Theorem \ref{radelt} every element of $\pi^2(\varphi)$ is radical, and Theorem \ref{AHthm} implies that $\varphi$ is a complete intersection.
\end{proof}

\subsection{The first Koszul homology module} The next result sharpens Theorem~\ref{Kosthm}. It substantially generalises the usual criterion that an ideal is generated by a regular sequence if and only if its first Koszul homology vanishes. 

\begin{theorem}\label{generalKoszul} 
Let $I$ be an ideal of finite projective dimension in a local ring $R$, and let $H=\H_1(I;R)$ be the first Koszul homology of $I$. If every homomorphism $H\to~\!\!\m_{R/I}$ factors through a finitely generated $R/I$-module of finite projective dimension then $I$ is generated by a regular sequence.
\end{theorem}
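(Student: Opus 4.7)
Following the template of Theorem \ref{sharpVC}, we will show that every $z \in \pi^2(\varphi)$ is radical, from which Theorem \ref{AHthm} gives the conclusion. As in that proof, the radicality of a single $z$ reduces via Lemma \ref{corelem} to producing a finitely generated $S$-module $M$ of finite projective dimension together with a factorisation $A \to M \to \m_S$ of the derivation $\overline{\theta_z}$ of Construction \ref{thetaCon}.

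Fix a minimal model $R \to A \to S$ and identify $A_{(2)} = R[X_1]$ with the Koszul complex $K_\bullet(I;R)$, so $H = \H_1(A_{(2)})$. The first step is to attach to each $z \in \pi^2(\varphi) \cong \Hom_R(RX_1,k)$ a natural $S$-linear map $\psi_z\colon H \to \m_S$. Pick an $R$-linear lift $\widetilde z\colon RX_1 \to R$ as in Construction \ref{thetaCon}, and set $\psi_z([c]) := \overline{\widetilde z(c)}$ on a cycle $c \in A_{(2),1} = RX_1$. Well-definedness rests on three points: minimality of the model forces every cycle $c = \sum r_i x_i$ to have $r_i \in \m_R$ (otherwise some $a_i = \partial x_i$ would be redundant), so the image lies in $\m_S$; the image of any boundary $\partial(\sum s_{ij} x_i x_j) = \sum s_{ij}(a_i x_j - x_i a_j)$ under $\widetilde z$ lies in $I$; and $S$-linearity follows from $R$-linearity of $\widetilde z$.

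By hypothesis, $\psi_z$ factors as $\beta \circ \alpha$ with $\alpha\colon H \to N$, $\beta\colon N \to \m_S$, and $N$ finitely generated of finite projective dimension over $S$. We will take $M = N$ in Lemma \ref{corelem}, so our task becomes constructing an $R$-linear chain derivation $\eta\colon A \to N$ with $\beta \circ \eta = \overline{\theta_z}$. Since $\eta$ has degree $-2$ and $N$ is concentrated in degree $0$, $\eta$ must vanish on $A_n$ for every $n \neq 2$; we therefore set $\eta(y) := \alpha([\partial y])$ for $y \in X_2$ and $\eta = 0$ on $R$ and on $X_i$ for $i \neq 2$, then extend by the Leibniz rule. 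Comparing with Construction \ref{thetaCon}: for $y \in X_2$, $\beta(\eta(y)) = \psi_z([\partial y]) = \overline{\widetilde z(\partial y)} = \overline{\theta_z}(y)$, and since both sides are derivations vanishing outside degree $2$, the equality $\beta \circ \eta = \overline{\theta_z}$ follows on all of $A$.

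The main obstacle is verifying that $\eta$ is a \emph{chain} derivation, i.e.\ that $\eta \circ \partial = 0$. Since $\eta$ is supported in degree $2$ and $\partial(A_3) \subseteq A_2$, the only case requiring attention is the restriction of $\eta \circ \partial$ to $A_3$. For a generator $y \in X_3$, write $\partial y = \sum r_i y_i + \sum s_{jk} x_j x_k$ with $y_i \in X_2$ and $x_j, x_k \in X_1$. The Leibniz rule yields $\eta(x_j x_k) = 0$, so $\eta(\partial y) = \alpha\bigl([\sum r_i \partial y_i]\bigr)$, and this class vanishes in $H$ because $\partial^2 y = 0$ exhibits $\sum r_i \partial y_i = -\partial(\sum s_{jk} x_j x_k)$ as a Koszul boundary in $A_{(2)}$. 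Monomials in $A_3$ of the form $y \cdot x$ with $y \in X_2$, $x \in X_1$, or of the form $x_l x_m x_n$, are handled by the same observation: any Leibniz-expansion term becomes a product in which a positive-degree element of $A$ multiplies an element of $N$, and such products vanish since the action of $A$ on $N$ through $A \to S$ annihilates positive-degree elements. With $\eta$ a chain derivation and $\beta \eta = \overline{\theta_z}$, Lemma \ref{corelem} declares $z$ radical; since $z$ was arbitrary, Theorem \ref{AHthm} forces $I$ to be generated by a regular sequence.
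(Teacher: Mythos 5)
Your proof is correct and follows the same route as the paper: reduce to showing every $z\in\pi^2(\varphi)$ is radical via Lemma \ref{corelem} and Theorem \ref{AHthm}, using the fact that $\overline{\theta_z}$ factors through $H=\H_1(I;R)$ and hence, by hypothesis, through a finitely generated module of finite projective dimension. The only difference is that where the paper invokes the presentation $(S\otimes_A\Omega_{A/R})_3\to(S\otimes_A\Omega_{A/R})_2\to H\to 0$ from Avramov--Herzog (2.5) to get this factorisation in one line, you verify it by hand on the Koszul complex $A_{(2)}=R[X_1]$, constructing the chain derivation $\eta$ explicitly and checking $\eta\partial=0$ via $\partial^2=0$ --- a correct, self-contained unpacking of the same argument.
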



\begin{proof}
Take a minimal model $A$ for the quotient $\varphi\colon R\to S$. By \cite[(2.5)]{MR1269426} there is a presentation
\[
(S\otimes_A\Omega_{A/R})_3\xrightarrow{\partial} (S\otimes_A\Omega_{A/R})_2\to H\to 0.
\]
Therefore, for any $z$ in $\pi^2(\varphi)$, the chain map $\overline{\theta_z}$ factors as  $S\otimes_A\Omega_{A/R}\to H\to~\m_S$. By hypothesis, it factors further through a finitely generated module of finite projective dimension. By Lemma \ref{corelem} this makes $z$ radical. Since $z$ was arbitrary, Theorem \ref{AHthm} implies that $\varphi$ is a complete intersection homomorphism.
\end{proof}

The case when $\H_1(I;R)$ is assumed to be free was proven by Gulliksen  \cite[Proposition 1.4.9]{MR0262227}. In fact Gulliksen  proves that $I$ is a complete intersection ideal whenever $\H_1(I;R)$ has a non-zero free summand.

Let us write $\tors(H)$ for the torsion submodule of $H$, consisting of those elements annihilated by a non-zero-divisor of $S$, and also  $H^*\coloneqq\Hom_S(H,S)$. 
Every homomorphism  $H\to \m_S$ factors canonically as $H\to H/\tors(H)\to H^{**}\to \m_S$. Therefore Theorem \ref{generalKoszul} implies that $\varphi$ is a complete intersection if either the torsion free quotient $H/\tors(H)$ or the reflexive hull $H^{**}$ have finite projective dimension.

\subsection{Andr\'e-Quillen homology modules} Here we give a generalisation of Theorem \ref{VC} which applies to any homomorphism essentially of finite type. In the statement, $D_i(S/K,-)$ is the $i$th cotangent homology functor defined by Andr\'e and Quillen. 

\begin{theorem}\label{eftversion}
Let $\psi\colon K\to S$ be a homomorphism of commutative noetherian rings, essentially of finite type and of finite flat dimension. If both  $S$-modules $D_1(S/K,S)$ and $D_0(S/K,S)$ have finite projective dimension, then  $\psi$ is locally a complete intersection.
\end{theorem}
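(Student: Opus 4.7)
The plan is to reduce Theorem~\ref{eftversion} to Theorem~\ref{VC} by using the Jacobi-Zariski long exact sequence of Andr\'e-Quillen homology to transfer the finite projective dimension hypothesis on $D_0$ and $D_1$ into a finite projective dimension statement for an honest conormal module.

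First I would localize: for every prime $\mathfrak{q}$ of $S$ lying over $\mathfrak{p} = \psi^{-1}(\mathfrak{q})$, the induced local homomorphism $\psi_{\mathfrak{q}}\colon K_{\mathfrak{p}} \to S_{\mathfrak{q}}$ still satisfies the hypotheses, because Andr\'e-Quillen homology commutes with localization and finite flat dimension is preserved. Being a complete intersection is a local condition, so I may assume $\psi$ is itself a local homomorphism. Since $\psi$ is essentially of finite type, it then admits a local factorization $K \to R \to S$ in which $R = K[x_1,\ldots,x_n]_Q$ is a localization of a polynomial ring and $R \to S$ is a surjection with kernel $I$.

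Since $R$ is essentially smooth over $K$, the terms $D_n(R/K,-)$ vanish for $n \geq 1$ and $S \otimes_R \Omega_{R/K}$ is a finitely generated projective $S$-module. Combined with $D_0(S/R,S) = 0$ and $D_1(S/R,S) \cong I/I^2$ for the surjection $R \to S$, the Jacobi-Zariski sequence for $K \to R \to S$ with coefficients in $S$ collapses to the four-term exact sequence
\[
0 \to D_1(S/K,S) \to I/I^2 \to S \otimes_R \Omega_{R/K} \to \Omega_{S/K} \to 0.
\]
Since each of the three non-conormal terms has finite projective dimension over $S$, a standard syzygy argument forces $\pd_S I/I^2 < \infty$.

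The main obstacle I anticipate is verifying the other hypothesis of Theorem~\ref{VC}, namely $\pd_R S < \infty$. Here the flatness of $R$ over $K$ yields a derived equivalence $S \lotimes_R \bar R \simeq S \lotimes_K k_K$, where $\bar R = R/\m_K R$ is a localization of the polynomial ring $k_K[x_1,\ldots,x_n]$ and therefore a regular local ring. The left-hand side thus has homology $\Tor^K_*(S,k_K)$, bounded in degrees $\leq \operatorname{fd}_K S$, while $\pd_{\bar R} k_R$ is bounded by $\dim \bar R$. A hyperhomology spectral sequence
\[
E^2_{p,q} = \Tor^{\bar R}_p\bigl(\Tor^K_q(S,k_K),\,k_R\bigr) \;\Longrightarrow\; \Tor^R_{p+q}(S,k_R)
\]
then has $E^2$-page supported in the bounded rectangle $[0,\dim \bar R] \times [0,\operatorname{fd}_K S]$, which forces $\Tor^R_n(S,k_R)=0$ for $n$ large, hence $\pd_R S < \infty$. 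With both projective dimensions finite, Theorem~\ref{VC} implies $I$ is generated by a regular sequence, so $\psi$ is a complete intersection.
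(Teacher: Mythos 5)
Your proposal is correct and follows essentially the same route as the paper: factor $\psi$ through an essentially smooth intermediate ring $R$, use the four-term Jacobi--Zariski exact sequence (in which $D_1(S/R,S)\cong I/I^2$ and $D_0(R/K,S)$ is projective) to deduce $\pd_S I/I^2<\infty$ from the hypotheses on $D_0$ and $D_1$, and then invoke Theorem~\ref{VC}. The only difference is that you supply a self-contained flat-base-change and spectral-sequence argument for $\pd_R S<\infty$, where the paper simply cites Avramov--Halperin; that step of yours is also correct.
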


In the case that $\psi$ is surjective with kernel $I$ we have $D_0(S/K,S)=\Omega_{S/K}=0$ and $D_1(S/K,S)= I/I^2$, so the statement extends Theorem \ref{VC}.



\begin{proof}
Let $K\to R\to S$ be a factorisation of $\psi$ such that $K\to R$ is smooth, and $R\to S$ is surjective with kernel $I$. By \cite[(3.2)]{MR896094} $\pd_RS$ is finite. The Jacobi-Zariski exact sequence \cite[Theorem 5.1]{MR1726700} takes the form
\[
0\to D_1(S/K,S)\to D_1(S/R,S)\to D_0(R/K,S)\to D_0(S/K,S)\to 0.
\]
The $S$-module $D_0(R/K,S)$ is free, since $R$ is smooth over $K$,  and therefore if  $D_1(S/K,S)$ and $D_0(S/K,S)$ have finite projective dimension so does $D_1(S/R,S)\cong I/I^2$. By Theorem \ref{VC}, $R\to S$ is locally a complete intersection, and this means by definition that $\psi$ is locally a complete intersection homomorphism. 
\end{proof}


\subsection{The module of K\"ahler differentials}\label{Kahlersection} In this subsection, $S$ is a local algebra with residue field $k$, essentially of finite type and having finite flat dimension over a commutative noetherian ring $K$.

In the case that $K$ is a field, Vasconcelos conjectured that under some appropriate separability conditions 
the finiteness of $\pd_S \Omega_{S/K}$ should imply $S$ is a complete intersection \cite{MR508082}. In the present context, this conjecture becomes the question: \emph{under what circumstances can one remove the assumption on $D_1(S/K,S)$ in Theorem \ref{eftversion}?} It turns out this question is closely related to a conjecture of Eisenbud and Mazur \cite{MR1465370}.

An \emph{evolution} of $S$ is a local $K$-algebra $T$, essentially of finite type, and a local $K$-algebra surjection $T\to S$ such that $S\otimes_T\Omega_{T/K}\to \Omega_{S/K}$ is an isomorphism.

These maps arose in the work of Scheja and Storch  \cite{MR263808} and B\"oger \cite{MR282966}, and have since proven  important in commutative algebra, especially in relation to symbolic powers (see \cite{MR3779569}). They appeared in the the work of Wiles \cite{MR1333035}, where it was crucial that a certain algebra did not have any evolutions, other than isomorphisms. That the following holds is now known as the Eisenbud-Mazur conjecture: \emph{if $K$ is either a field of characteristic zero, or a mixed characteristic discrete valuation ring, and $S$ is a flat, reduced $K$-algebra, then $S$ does not admit nontrivial evolutions}.






\begin{theorem}\label{cotthm}
Assume that $S$ has no non-trivial evolutions. If the module of K\"ahler differentials $\Omega_{S/K}$ has finite projective dimension, then $S$ is a complete intersection over $K$.
\end{theorem}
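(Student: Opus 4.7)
The plan is to reduce to Theorem~\ref{eftversion} by showing that the no-evolutions hypothesis forces $D_1(S/K, S) = 0$. To start, I would factor $\psi$ as $K \to R \to S$ with $K \to R$ smooth and essentially of finite type (say with $R$ a suitable localisation of a polynomial algebra over $K$) and $R \to S$ a surjection with kernel $I$. Since $R/K$ is smooth, $D_i(R/K, -) = 0$ for $i \geq 1$, and the Jacobi--Zariski long exact sequence collapses to
\[
0 \to D_1(S/K, S) \to I/I^2 \xrightarrow{\, d\, } S \otimes_R \Omega_{R/K} \to \Omega_{S/K} \to 0.
\]
The middle term $S \otimes_R \Omega_{R/K}$ is finitely generated and free over $S$, so the hypothesis $\pd_S\Omega_{S/K} < \infty$ immediately gives finite projective dimension to the image of $d$. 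If one can further establish $D_1(S/K, S) = 0$, then $I/I^2$ has finite projective dimension over $S$, and Theorem~\ref{eftversion} finishes the argument.

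Suppose for contradiction that $D_1(S/K, S) \neq 0$. As a submodule of the noetherian $S$-module $I/I^2$, it is finitely generated, and Nakayama's lemma yields $\bar g \in D_1(S/K, S)$ non-zero modulo $\m_S(I/I^2)$; equivalently, a lift $g \in I$ satisfies $g \notin \m_R I$. Hence $g$ extends to a minimal generating set $g, f_1, \ldots, f_{n-1}$ of $I$. Setting $J = (f_1, \ldots, f_{n-1})$ and $T = R/J$, minimality forces $J \subsetneq I$, so $T \to S$ is a proper local surjection of $K$-algebras essentially of finite type. The crucial point is then to recognise $T \to S$ as an evolution: because $\bar g \in \ker d$, the differential $dg$ vanishes in $S \otimes_R \Omega_{R/K}$, and so
\[
S \otimes_T \Omega_{T/K} \cong \frac{S \otimes_R \Omega_{R/K}}{\langle df_1, \ldots, df_{n-1}\rangle} = \frac{S \otimes_R \Omega_{R/K}}{\langle dg, df_1, \ldots, df_{n-1}\rangle} \cong \Omega_{S/K}.
\]
This contradicts the assumption that $S$ admits no non-trivial evolutions, forcing $D_1(S/K, S) = 0$ and completing the reduction.

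The main point of delicacy is arranging that $g$ is a minimal generator of $I$, rather than merely a non-zero element of $D_1(S/K, S) \subseteq I/I^2$; this is precisely what the Nakayama step accomplishes, and it is what ensures $J = (f_1,\ldots,f_{n-1})$ is a \emph{strict} subideal of $I$ and the resulting evolution $T \to S$ is non-trivial. The remaining ingredients---the Jacobi--Zariski sequence, the identification $D_1(S/R, S) \cong I/I^2$, and the presentation of $\Omega_{T/K}$---are entirely standard.
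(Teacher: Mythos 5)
Your reduction aims for too strong a conclusion, and the step meant to deliver it fails. You want $D_1(S/K,S)=\ker(d)=0$, so that $I/I^2\cong\operatorname{im}(d)$ is the first syzygy of $\Omega_{S/K}$ and Theorem~\ref{eftversion} applies. But the no-evolutions hypothesis does not control all of $\ker(d)$: by Lenstra's criterion (Proposition~\ref{Lenstra}) it is equivalent only to the statement that no \emph{minimal generator} of $I/I^2$ lies in $\ker(d)$, i.e.\ to $\ker(d)\subseteq\m_S(I/I^2)$, i.e.\ to injectivity of $d\otimes_S k$. The error is in your Nakayama step: applying Nakayama's lemma to the finitely generated module $D_1(S/K,S)$ produces an element outside $\m_S\cdot D_1(S/K,S)$, not an element outside $\m_S(I/I^2)$. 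A non-zero submodule of $I/I^2$ may well be contained entirely in $\m_S(I/I^2)$, and in that case no element of $\ker(d)$ lifts to an element $g\in I\setminus\m_R I$; your proper subideal $J$ and non-trivial evolution $T=R/J\to S$ then cannot be constructed, and no contradiction is reached. (The evolution construction itself, for a genuine minimal generator $g$ with $dg=0$, is correct --- it is essentially a proof of one direction of Proposition~\ref{Lenstra}, which the paper cites as a black box.)

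What your contradiction argument actually proves is only $\ker(d)\subseteq\m_S(I/I^2)$, and with that weaker conclusion you cannot conclude $\pd_S(I/I^2)<\infty$ or invoke Theorem~\ref{eftversion}. This is exactly why the paper routes the argument through Theorem~\ref{sharpVC} instead: taking $N\subseteq S\otimes_R\Omega_{R/K}$ to be the first syzygy of $\Omega_{S/K}$ (so $\pd_S N<\infty$, as you observed), the condition $\ker(d)\subseteq\m_S(I/I^2)$ says precisely that the induced map $\alpha\colon I/I^2\to N$ has $\alpha\otimes_S k$ injective, which is all Theorem~\ref{sharpVC} requires. Your proof is repaired by weakening the claim from $D_1(S/K,S)=0$ to $D_1(S/K,S)\subseteq\m_S(I/I^2)$ and replacing the final appeal to Theorem~\ref{eftversion} by one to Theorem~\ref{sharpVC}.
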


It is well-known that (as long as $S$ is flat and generically separable over $K$) if $\pd_S\Omega_{S/K}$ is finite then $S$ is reduced. Therefore, the theorem  asserts that Vasconcelos' conjecture on the K\"ahler differentials is a consequence of the Eisenbud-Mazur conjecture. Since Platte has shown that $S$ must be quasi-Gorenstein if $\pd_S\Omega_{S/K}$ is finite \cite{MR595423}, it would suffice to prove the Eisenbud-Mazur conjecture for quasi-Gorenstein rings (that is, rings having a free canonical module). The Eisenbud-Mazur conjecture is known in various cases (cf.~\cite{MR1465370}), so we obtain various new cases for Vasconcelos' conjecture.

We deduce Theorem \ref{cotthm} using a  result of Lenstra.

\begin{proposition}[Lenstra \cite{MR1465370}]\label{Lenstra}
Choose a presentation $S=R/I$, where $R$ is a localisation of a polynomial $K$-algebra. The following are equivalent
\begin{enumerate}
    \item every evolution of $S$ is an isomorphism;
    \item\label{minseq} in the exact sequence 
    \[
    I/I^2 \xrightarrow{\ d\ }  S\otimes_R\Omega_{R/K}\to \Omega_{S/K}\to 0
    \]
    no minimal generator of $I/I^2$ is mapped to zero by $d$.\qed
\end{enumerate}
\end{proposition}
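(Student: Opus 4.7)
The plan is to prove the equivalence via both contrapositives. The underlying geometric picture is that non-trivial evolutions of $S$ correspond to redundant minimal generators of $I$ killed by the universal derivation $d$, and each direction amounts to making this correspondence explicit.

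For $(1)\Rightarrow(2)$, contrapositively, assume a minimal generating sequence $f_1,\ldots,f_n$ of $I$ satisfies $df_1\in I\Omega_{R/K}$. I would construct a non-trivial evolution by setting $J=(\m_R f_1,f_2,\ldots,f_n)\subseteq R$ and $T=R/J$. That $J\subsetneq I$ follows from a Nakayama argument: if $f_1\in J$, then $(1-a)f_1\in(f_2,\ldots,f_n)$ for some $a\in\m_R$, and inverting the unit $1-a$ contradicts minimality. The kernel of $T\to S$ is generated by the class $\bar f_1$, so the conormal sequence $(\bar f_1)/(\bar f_1)^2\to S\otimes_T\Omega_{T/K}\to\Omega_{S/K}\to 0$ has left map sending $\bar f_1$ to the image of $df_1$. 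Realising $S\otimes_T\Omega_{T/K}$ as the quotient of $S\otimes_R\Omega_{R/K}$ by the $S$-submodule generated by $dJ$, this image vanishes by hypothesis, so $S\otimes_T\Omega_{T/K}\cong\Omega_{S/K}$.

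For $(2)\Rightarrow(1)$, contrapositively, start with a non-trivial evolution $T\to S$ with kernel $J\neq 0$. First, I would observe that condition (2) is stable under enlarging $R$ by polynomial variables: adjoining $y$ adds $y$ as an extra minimal generator of the new ideal, with $dy$ a free basis element of the K\"ahler module, hence nonzero. This lets me enlarge the presentation so that the lift $\phi\colon R\to T$ of $R\to S$ --- which exists by formal smoothness of $R$ over $K$ --- is surjective. Set $I':=\ker\phi$, so $T=R/I'$ and $J=I/I'$. The evolution condition then translates, via the conormal sequence for $R\to T\to S$, into the statement: for every $f\in I$ there is $g\in I'$ with $d(f-g)\in I\Omega_{R/K}$. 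Pick a minimal generator $j$ of the $T$-ideal $J$ and lift it to $\tilde j\in I$; since $j\notin\m_T J$, the element $\tilde j$ lies in $I\setminus(I'+\m_R I)$, making it a minimal generator of $I$. Applying the evolution condition to $\tilde j$ produces $g\in I'$ with $d(\tilde j-g)\in I\Omega_{R/K}$, and since $g\in I'+\m_R I$ while $\tilde j\notin I'+\m_R I$, the difference $\tilde j-g$ lies in $I\setminus\m_R I$ and is thus a minimal generator of $I$, now mapped to zero by $d$.

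The main obstacle is the reduction in the second direction to a presentation in which $\phi$ is surjective; once that is in place, the dimensional bookkeeping between $V=I/\m_R I$, its subspace $\bar{I'}:=(I'+\m_R I)/\m_R I$, and the quotient $V/\bar{I'}=J/\m_T J$ is essentially automatic. Presentation-independence of (2), which drives this reduction, is a direct consequence of the splitting $\Omega_{R[y]/K}\cong R[y]\otimes_R\Omega_{R/K}\oplus R[y]\,dy$, and of the corresponding compatibility of the differentials $d$ across presentations.
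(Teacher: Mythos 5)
The paper does not prove this proposition; it is quoted from Eisenbud--Mazur (where it is attributed to Lenstra) and stamped with \qed. So there is no in-paper argument to compare against, and your proposal should be judged on its own. Its overall structure is the standard one and is sound: for $(1)\Rightarrow(2)$ the evolution $T=R/(\m_R f_1,f_2,\dots,f_n)$ is correct (the Nakayama step and the identification of $S\otimes_T\Omega_{T/K}$ as the quotient of $S\otimes_R\Omega_{R/K}$ by $\langle dJ\rangle$ both check out), and for $(2)\Rightarrow(1)$ the translation of the evolution condition into ``for every $f\in I$ there is $g\in I'$ with $d(f-g)\in I\Omega_{R/K}$'' is right, as is the choice of $\tilde j$ and the conclusion that $\tilde j-g$ is a minimal generator of $I$ killed by $d$.

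Two steps are under-justified as written. First, the presentation-independence of (2): your in-line argument only checks that the \emph{new} generator $y$ is not killed by $d$, but (2) quantifies over all minimal generators of the enlarged ideal $I''=IR'+yR'$, including mixed ones of the form $\sum a_if_i+yg$. The splitting $\Omega_{R'/K}\cong (R'\otimes_R\Omega_{R/K})\oplus R'\,dy$ that you invoke at the end does give a complete proof, but one must actually run it: from $df''\in I''\Omega_{R'/K}$ extract that the $dy$-coefficient lies in $I''$ and that the reduction of $f''$ modulo $y$ satisfies the hypothesis of (2) over $R$, then reassemble to get $f''\in\m_{R'}I''$. Second, the lift $R\to T$: formal smoothness per se only gives lifting along nilpotent extensions, and the kernel $J$ of $T\to S$ need not be nilpotent. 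Either first replace $T$ by $T/J^2$ (still a nontrivial evolution, since $d(J^2)\subseteq J\Omega_{T/K}$ and $J\neq J^2$ by Nakayama), or lift directly by sending each polynomial variable to an arbitrary preimage in $T$ and noting that elements of the multiplicative set land outside $\m_T$ because $T\to S$ is a local surjection. With these two points filled in, the argument is complete.
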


\begin{proof}[Proof of Theorem \ref{cotthm}]
%
Let $\varphi\colon R\to S$ as in Proposition \ref{Lenstra}. By \cite[(3.2)]{MR896094} $\pd_RS$ is finite. 
By definition, $S$ is a complete intersection over $K$ if and only if $\varphi$ is a complete intersection homomorphism.

We take the first syzygy of $\Omega_{S/K}$, defined by the exact sequence
\[
0\to N\to S\otimes_R\Omega_{R/K}\to \Omega_{S/K}\to 0.
\]
Then $N$ has finite projective dimension because $\Omega_{S/K}$ does. 
The exact sequence in Proposition \ref{Lenstra} part (\ref{minseq})
induces a map $\alpha\colon I/I^2\to N$, and (\ref{minseq}) says exactly that $\alpha\otimes_S k$ is injective. Therefore we are in the situation of Theorem \ref{sharpVC}, and $\varphi$ is a complete intersection homomorphism. 
\end{proof}

\bibliographystyle{amsplain}
\bibliography{Vasconcelos}

\end{document}